\DeclarePairedDelimiter\norm{\lVert}{\rVert}
\newcommand{\vbf}{\mathbf{v}}
\newcommand{\ubf}{\mathbf{u}}
\newtheorem{theorem}{Theorem}
\newtheorem{lemma}[theorem]{Lemma}
\newtheorem{conjecture}[theorem]{Conjecture}
\theoremstyle{definition}
\newtheorem{definition}[theorem]{Definition}
\theoremstyle{remark}
\title{Nine and ten lonely runners}
\author[Tanupat Trakulthongchai]{Tanupat Trakulthongchai}
\address{St John's College, University of Oxford, Oxford OX1 3JP, United Kingdom}
\email{tanupat.trakulthongchai@sjc.ox.ac.uk}
\date{\today}
\subjclass[2020]{11K60}
\begin{document}
\begin{abstract}
    The Lonely Runner Conjecture of Wills and Cusick states that if $k+1$ runners start running at distinct constant speeds around a unit-length circular track, then for each runner there is a time when he/she is at least $1/(k+1)$ away from all other runners.  Rosenfeld recently obtained a computer-assisted proof of the conjecture for $8$ runners.  By refining his approach with a sieve, we obtain proofs (also computer-assisted) for $9$ and $10$ runners.
\end{abstract}
\maketitle

\section{Introduction}
Wills \cite{Wills_1967} first posed the Lonely Runner Conjecture in the 1960s while studying Diophantine approximation.  The conjecture has since become a well-studied problem in combinatorics, with interpretations in terms of geometric view obstruction \cite{cusick1972view} and colourings of distance graphs \cite{zhu2002circular}. The following formulation, which gives the conjecture its name, comes from \cite{bienia1998flows}: 

Consider $k+1$ runners with distinct constant speeds.  Suppose they simultaneously begin running around a circular track with unit circumference, starting from the same location, and continue indefinitely. We say that a runner is \emph{lonely} if he/she is at a distance of at least $\frac{1}{k+1}$ away from all other runners. The Lonely Runner Conjecture predicts that regardless of the runners' speeds, each runner will become lonely at some time.

To prove the conjecture, it suffices to consider the case where one runner's speed is zero and show that this stationary runner is guaranteed to become lonely. A non-trivial result of \cite{bohman2001six} shows that it suffices to consider the case where all nonzero speeds are positive integers, though this is also implicit from \cite{wills1968simultanen} and \cite{betke1972untere}. Consequently, the Lonely Runner Conjecture for $k+1$ runners is equivalent to the following statement, where $\norm{x}$ denotes the distance from a real number $x$ to $\mathbb{Z}$.

\begin{conjecture}[Lonely Runner Conjecture for $k+1$ runners]
\label{conj:lrc}
    Let $v_1,\ldots,v_k$ be positive integers. There exists $t\in\mathbb{R}$ such that for every $i\in\{1,\ldots,k\}$,
    \[\norm{tv_i}\geq\frac{1}{k+1}.\]
\end{conjecture}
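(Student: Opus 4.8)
The statement displayed is the full Lonely Runner Conjecture, which is still open; the plan below is written in the generality in which it would settle all $k$, and I then indicate the point at which, with present technology, it reaches only $k=8$ and $k=9$ (the nine- and ten-runner cases proved here). First I set up the covering reformulation. Since the problem is already reduced to positive integer speeds by \cite{bohman2001six}, and since the quantity $\max_{t}\min_i\norm{tv_i}$ is unchanged when $v_1,\dots,v_k$ are all multiplied by a common positive integer, I may assume $\gcd(v_1,\dots,v_k)=1$. Writing $\gamma=\tfrac{1}{k+1}$ and $B_i=\{t\in[0,1):\norm{tv_i}<\gamma\}$, each $B_i$ is a union of $v_i$ arcs of total measure $2\gamma$, and Conjecture~\ref{conj:lrc} asserts exactly that $B_1\cup\dots\cup B_k\neq[0,1)$. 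Since $\sum_i|B_i|=\tfrac{2k}{k+1}<2$ but is at least $1$, a naive union bound fails, and everything hinges on the correlations forced by the linear structure of the $B_i$.

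The classical tool for exploiting those correlations is the averaging (harmonic-analytic) method. Fix a nonnegative function $f$ on $\mathbb{R}/\mathbb{Z}$ that vanishes on the bad arc $\{\norm{x}<\gamma\}$, has $\widehat{f}(0)>0$, and has rapidly decaying Fourier coefficients — for instance a smoothed indicator of $[\gamma',1-\gamma']$ with $\gamma'$ slightly above $\gamma$. Expanding the product and integrating,
\[
\int_0^1 \prod_{i=1}^k f(tv_i)\,dt \;=\; \widehat{f}(0)^k \;+\!\!\sum_{\substack{(n_1,\dots,n_k)\neq 0\\ n_1v_1+\dots+n_kv_k=0}} \prod_{i=1}^k \widehat{f}(n_i),
\]
and, because $f\ge 0$ vanishes on $\{\norm{x}<\gamma\}$, positivity of the right-hand side yields a time $t$ at which every $f(tv_i)>0$, hence $\norm{tv_i}\ge\gamma$ for all $i$. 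The sum over nontrivial relations $n_1v_1+\dots+n_kv_k=0$ is dominated, by the decay of $\widehat{f}$, by relations with small coefficients, so the task becomes: choose $f$ so that those are outweighed by the main term $\widehat{f}(0)^k$. This succeeds when the tuple $(v_1,\dots,v_k)$ admits few low-height relations; it is exactly the tuples with many low-height relations — the genuinely hard ones, including $(1,2,\dots,k)$ and its relatives — that resist the method.

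To deal with those, I would first reduce, for each fixed $k$, to a finite problem. A compactness/replacement argument, made effective, shows that if Conjecture~\ref{conj:lrc} fails for some $k$ it fails for a tuple with $\max_i v_i\le C(k)$ for an explicit $C(k)$: heuristically, a speed that is large and ``unrelated'' to the others contributes arcs too short and too uniformly spread to be the obstruction, so it can be perturbed away. This turns the conjecture, for each $k$, into a finite verification, which is the route taken by Rosenfeld for $k\le 7$ and, after sharpening $C(k)$ and pruning the search with a sieve, for $k=8,9$ here.

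The main obstacle — and the reason the conjecture is open for $k\ge 10$ — is uniformity in $k$: the bound $C(k)$ coming from the replacement argument grows far too fast for the finite check to be performed for all $k$, while the averaging method, though uniform, is defeated by the low-height relations of the hardest tuples. A full proof thus seems to require one of the following: (i) a drastically better bound $C(k)$, polynomial in $k$ with a small constant, making the finite check tractable in general; (ii) an honest induction from $k-1$ lonely runners to $k$, whereas all known inductions lose a constant factor and land short of $\tfrac{1}{k+1}$; or (iii) a refined test function $f$, or a non-Fourier positivity certificate, controlling every nontrivial relation $\sum_i n_iv_i=0$ simultaneously, presumably guided by a structural classification of the tuples that carry many low-height relations. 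I expect (iii), combined with such a classification, to be the most promising long-term line; but the version that actually goes through today, and yields the theorems for nine and ten runners, is the finite reduction followed by the sieve-assisted search.
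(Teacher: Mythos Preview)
The statement is Conjecture~\ref{conj:lrc}, which the paper explicitly leaves open in full generality; there is no proof of it in the paper to compare against. You correctly recognize this, and your write-up is a survey of possible approaches rather than a proof. The Fourier/averaging discussion, while accurate as background, plays no role in the paper's argument and, as you yourself note, does not close on the hard tuples.

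Where you do make contact with the paper --- the claim that $k=8,9$ follow from a ``finite reduction followed by sieve-assisted search'' --- is too vague to stand as a proof, and it also misdescribes the mechanism. The finite reduction (from \cite{malikiosis2024linearly}) bounds the \emph{product} $v_1\cdots v_k$, not $\max_i v_i$, and a direct enumeration below that bound is still hopelessly infeasible. The actual argument is Rosenfeld's indirect one: for each prime $p$ in a carefully chosen set $S_k$, one verifies (by exhausting residues of the speeds modulo $(k+1)p$ and exhibiting a witness time $t\in\frac{1}{(k+1)p}\mathbb{Z}$) that any counterexample must satisfy $p\mid v_1\cdots v_k$; once $\prod_{p\in S_k}p$ exceeds the Malikiosis--Santos--Schymura threshold, no counterexample can exist. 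The paper's ``sieve'' is not a sieve on the candidate speed tuples themselves but on these residue checks: one first computes the improper residue classes modulo $\ell p$ for proper divisors $\ell\mid k+1$ and lifts only the survivors to level $(k+1)p$ (Lemma~\ref{lem:sieve}), cutting the per-prime work enough to reach $k=9$. None of this structure is visible in your sketch.
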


At present, Conjecture \ref{conj:lrc} is known to be true for $k\leq7$ (i.e., for a total of at most $8$ runners, including the stationary runner). The first two cases $k\in\{1,2\}$ are easy, and $k=3$ was resolved by Betke and Wills \cite{betke1972untere}. The case $k=4$ was solved first by Cusick and Pomerance \cite{cusick1984view} with computer assistance, and a proof without computer assistance was given by Bienia, Goddyn, Gvozdjak, Seb{\H{o}} and Tarsi \cite{bienia1998flows}. The proof of $k=5$ was due to Bohman, Holzman, and Kleitman \cite{bohman2001six}. Renault \cite{renault2004view} later produced a simpler proof for $k\leq5$. The case $k=6$ was settled by Barajas and Serra \cite{barajas2008lonely} in 2008.  In 2025, Rosenfeld \cite{rosenfeld2025lonely} proved the Lonely Runner Conjecture for $k=7$ using a new computational approach. In this paper, we prove the Lonely Runner Conjecture for $k \in \{8,9\}$ by refining Rosenfeld's approach in \cite{rosenfeld2025lonely}. 

\begin{theorem}
\label{thm:lrc}
    The Lonely Runner Conjecture holds for 9 and 10 runners.
\end{theorem}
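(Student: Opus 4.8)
The plan is to build on Rosenfeld's computational framework for the case $k=7$ and to insert a sieving step that keeps the resulting computation within reach for $k=8$ and $k=9$. After the usual normalisations --- we may assume the speeds $v_1<\dots<v_k$ are distinct, positive, and satisfy $\gcd(v_1,\dots,v_k)=1$, since a common divisor merely rescales the time $t$ and equal speeds only reduce the number of runners --- the conjecture for $k$ runners becomes the purely covering-theoretic assertion that the \emph{bad set} $\bigcup_{i=1}^{k}B_i$, with $B_i=\{t\in[0,1):\norm{tv_i}<\tfrac1{k+1}\}$ a union of $v_i$ arcs of total length $\tfrac{2}{k+1}$, does not exhaust $[0,1)$. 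Read this way the problem is a sieve: we want a point of $[0,1)$ avoiding every forbidden family of arcs, and the first-moment bound $\lvert\bigcup_i B_i\rvert\le\tfrac{2k}{k+1}$ is useless since it exceeds $1$, so any argument must exploit the overlaps $B_i\cap B_j$, which are large precisely when $\gcd(v_i,v_j)$ is.

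The backbone is a branch-and-bound recursion over sub-arcs of $[0,1)$. At each node one carries a sub-arc $I$ together with the list of speeds whose bad arcs still meet $I$, and one of three things occurs: (a) an explicit lonely time is exhibited inside $I$; (b) $I$ is certified to lie inside a single $B_i$, so the node is dead; or (c) $I$ is split at the endpoints of the bad arcs of the least active speed and its children are recursed on. The structural input, already present in Rosenfeld's treatment, is that this recursion terminates: below a bounded depth every relevant inequality becomes strict, so, after quotienting by the natural rescaling and relabelling symmetries, only finitely many genuinely distinct configurations remain to be inspected. The known cases $k\le7$ are used to close any branch on which at most $k-1$ speeds stay active over a sufficiently long surviving arc, since the $(k-1)$-runner conjecture then supplies a time with gap $\ge\tfrac1{k}>\tfrac1{k+1}$.

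The new ingredient, needed because the unmodified tree for $k=8$ and $k=9$ is far too large, is to sieve the candidate times before the recursion is ever entered. For each configuration one chooses, adaptively, a modulus $m$ assembled from a few small primes dividing many of the pairwise gcds and differences of the $v_i$, partitions $[0,1)$ by the residue of $mt$, and for each residue class bounds its contribution to $\bigcup_i B_i$ by a truncated Bonferroni expansion in the intersections $B_i\cap B_j\cap\cdots$, whose terms are computable from the relevant gcds. Residue classes whose bound already falls short of full coverage are rich in lonely times and are disposed of immediately; only the few residue classes the sieve fails to clear are passed to the branch-and-bound routine, now with a sharply reduced branching factor. This is what collapses the case count for $9$ and $10$ runners to a size that a certified search can finish.

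I expect the main obstacle to be calibrating the sieve: the Bonferroni truncation must be long enough to discard the overwhelming majority of residue classes yet short enough to keep each surviving class cheap, the modulus $m$ must be chosen case-by-case, and the bookkeeping of the error terms and of the interaction between the sieve and the recursion is where the genuine work lies. A second, more mundane but unavoidable obstacle is rigour of the computation: proving that the arc subdivisions are exhaustive, that every numerical inequality is verified by rigorous interval arithmetic, and that the reduction to finitely many configurations is complete, since the theorem is only as sound as that verification. Granting both, running the sieve-assisted search to termination for $k=8$ and $k=9$ yields the Lonely Runner Conjecture for $9$ and $10$ runners.
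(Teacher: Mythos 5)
There is a genuine gap, and it is structural rather than a matter of calibration. Your branch-and-bound recursion over sub-arcs of $[0,1)$, together with the residue-class/Bonferroni sieve on candidate times, operates on a \emph{single fixed} speed tuple $(v_1,\dots,v_k)$: it is a procedure for deciding whether one given tuple has a lonely time. The conjecture, however, quantifies over \emph{all} integer speed tuples, and nothing in your proposal reduces this infinite family to a finite verification. The assertion that ``after quotienting by the natural rescaling and relabelling symmetries, only finitely many genuinely distinct configurations remain to be inspected'' is not established and is not ``already present in Rosenfeld's treatment''; without an a priori bound on the speeds (or on their product) it is simply false that finitely many configurations suffice. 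This is precisely the point the actual argument must supply: one needs (i) the result of Malikiosis, Santos and Schymura that, assuming the conjecture for $k$ runners, any counterexample for $k+1$ runners satisfies $v_1v_2\cdots v_k<C_k$ for an explicit $C_k$, and (ii) Rosenfeld's criterion (generalized in the paper as Lemma \ref{lem:div}) showing that if the finite set $I(k,k+1,p)$ of ``improper'' residue configurations modulo $(k+1)p$ is empty, then $p$ must divide $v_1\cdots v_k$ in any counterexample. Exhibiting enough such primes, with product exceeding $C_k$, rules out all counterexamples at once. Your proposal never invokes any bound of type (i) nor any divisibility criterion of type (ii), so even granting unlimited computation it does not terminate in a proof.

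Relatedly, the sieve you describe is not the one that makes the computation feasible, and it sieves the wrong objects. The paper's sieve (Lemma \ref{lem:sieve}) acts on \emph{speed sets}, not on times: improperness at level $m$ forces improperness of the reduction modulo $\ell p$ for every $\ell\mid m$, so one computes $I(k,\ell,p)$ for small divisors $\ell$ of $k+1$ and only examines the shadow $\delta_m I(k,\ell,p)$ at the next level. A Bonferroni-type bound on residue classes of $t$ for a fixed tuple, by contrast, neither addresses the enumeration over tuples nor interacts with the divisibility criterion; moreover the measure-theoretic starting point you cite ($|\bigcup_i B_i|\le \tfrac{2k}{k+1}$) plays no role in the actual argument, which works with exact witness times of the form $t/((k+1)p)$. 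The use you propose for the known cases $k\le 7$ (closing branches where few speeds stay active) is loosely analogous to the gcd reduction in Lemma \ref{rem:div}, but in the paper that reduction is applied to residue configurations with a common divisor, again inside the divisibility framework, not inside an arc-subdivision search. As written, your plan would not yield the theorem even in principle.
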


Note that a day after this paper was published as a preprint, Rosenfeld \cite{rosenfeld2025lonely2} himself also independently announced a proof for $k=8$, using a different refinement of \cite{rosenfeld2025lonely}.

\section{Rosenfeld's computational approach}

In this preliminary section we give some context for Rosenfeld's approach in \cite{rosenfeld2025lonely} and indicate the source of our improvement.

It has been known for a number of years that for each $k$, deciding the Lonely Runner Conjecture for up to $k+1$ runners is a finite calculation.  The first result in this direction is due to Tao \cite{tao2018some}, who showed that there is an explicitly computable constant $C>0$ such that the following holds: Suppose that the Lonely Runner Conjecture holds for $k$ runners; then the Lonely Runner Conjecture for $k+1$ runners holds unless there is a counterexample with all (positive integer) speeds at most $k^{Ck^2}$.  This means that to prove or disprove the Lonely Runner Conjecture for $k+1$ runners, it suffices to check all sets of speeds up to $k^{Ck^2}$.  Unfortunately, the constant $C$ is enormous, so this result is not practical for obtaining any new instances of the Lonely Runner Conjecture.

This finite-checking result was improved by Giri and Kravitz \cite{giri2023structure} and by Malikiosis, Santos and Schymura \cite{malikiosis2024linearly}.  The latter authors showed that, assuming that the Lonely Runner Conjecture holds for $k$ runners, the Lonely Runner Conjecture for $k+1$ runners can be proven by checking that there are no positive integer counterexamples $v_1,v_2, \ldots, v_k$ with\footnote{Indeed, the bound given in \cite{malikiosis2024linearly} is in terms of a sum, but this can be easily transformed to a product via the AM-GM inequality (see Corollary 3 of \cite{rosenfeld2025lonely}).} 
\[v_1v_2\cdots v_k<\left[\frac{\binom{k+1}{2}^{k-1}}{k}\right]^k.\]
This quantity is much more reasonable, but even for $k=7$ it is large enough that a naive case exhaustion is not feasible.

Rosenfeld's breakthrough idea was showing that in any putative counterexample $v_1, v_2, \ldots, v_k$ to the Lonely Runner Conjecture for $k+1$ runners, there must be many primes dividing the product $v_1v_2\cdots v_k$.  If one can exhibit sufficiently many primes that must divide $v_1v_2\cdots v_k$ in any counterexample, then one can conclude that any counterexample has $v_1v_2\cdots v_k$ above the threshold of Malikiosis, Santos, and Schymura.  This would imply that in fact the Lonely Runner Conjecture holds for $k+1$ runners.

Rosenfeld introduced a clever criterion for showing that a given prime $p$ must divide $v_1v_2\cdots v_k$ in any counterexample.  He achieved this by combining the ``pre-jump'' idea of \cite{bienia1998flows} with the ansatz of checking rational times with denominator $(k+1)p$.  This computation turned out to be barely manageable for $k=7$ with a computer; the last few primes each took tens of hours to be checked.  The approach is too slow to handle larger $k$.

Our main innovation is introducing a ``sieve lemma'' that makes checking for a dividing prime $p$ more efficient.  This improvement allows us to carry out Rosenfeld's general strategy for $k \in \{8,9\}$.

\section{Reduction of possible counterexamples}
\label{sec:pre}
For positive integers\footnote{In particular, $p$ does not need to be a prime, although it will be in our applications.} $\ell,p$, we define the set \[B(\ell,p)=\{0,1,\ldots,\ell p-1\}\setminus\{0,p,\ldots,(\ell-1)p\}.\] We call a $k$-tuple $\vbf=(v_1,\ldots,v_k)\in\mathbb{N}^k$ a \textit{speed tuple}. A speed tuple $\vbf$ is said to have the \emph{LR property} if it satisfies the Lonely Runner Conjecture.

We begin by tracing the steps of \cite{rosenfeld2025lonely}, but in greater generality.

\begin{definition}
    Let $k\geq2$. A speed tuple $\vbf\in B(\ell,p)^k$ is said to be $(k,\ell,p)$-\textit{proper} if one of the following holds:
    \begin{itemize}
        \item there exists $i\in\{1,\ldots,k\}$ such that \[\gcd(v_1,\ldots,v_{i-1},v_{i+1},\ldots,v_k,\ell p)>1,\]or
        \item there exists $t\in\{0,1,\ldots,\ell p-1\}$ such that, for all $i\in\{1,\ldots,k\}$,
        \[\norm*{\frac{tv_i}{\ell p}}\geq\frac{1}{k+1}.\]
    \end{itemize}
    If $\vbf\in B(\ell,p)^k$ is not $(k,\ell,p)$-proper, then we say it is $(k,\ell,p)$-\textit{improper}. Define $I(k,\ell,p)$ to be the set of all $(k,\ell,p)$-improper speed tuples.
\end{definition}

The following is a fact noted in \cite{bienia1998flows} and \cite{rosenfeld2025lonely}, which we state without proof.

\begin{lemma}[Lemma 5 of \cite{rosenfeld2025lonely}]
\label{rem:div}
    Suppose that the Lonely Runner Conjecture holds for $k-1$, where $k\geq3$. If $\vbf$ is a speed tuple with $\gcd(v_1,\ldots,v_k)=1$, and there is $i$ such that $\gcd(v_1,\ldots,v_{i-1},v_{i+1},\ldots,v_k)>1$, then $\vbf$ has the LR property.
\end{lemma}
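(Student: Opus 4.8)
The plan is to derive the LR property for $\vbf$ from the Lonely Runner Conjecture for the first $k-1$ coordinates by a translation trick. After relabelling we may assume $i=k$, so that $d:=\gcd(v_1,\dots,v_{k-1})>1$; since $\gcd(v_1,\dots,v_k)=1$, this forces $\gcd(d,v_k)=1$. First I would apply the Lonely Runner Conjecture for $k-1$ to the $(k-1)$-tuple $(v_1,\dots,v_{k-1})$, obtaining a time $t_0$ with $\norm{t_0v_j}\ge\frac1k$ for all $1\le j\le k-1$; note the slack $\frac1k>\frac1{k+1}$.

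The key observation is that the good region $A:=\{\,t\in\mathbb{R}:\norm{tv_j}\ge\frac1{k+1}\text{ for all }1\le j\le k-1\,\}$ is invariant under the translation $t\mapsto t+\frac1d$, since $d\mid v_j$ makes each function $t\mapsto\norm{tv_j}$ periodic with period $\frac1d$. As $t_0\in A$, all $d$ points $t_0+\frac md$ with $m=0,1,\dots,d-1$ lie in $A$. It therefore suffices to exhibit one index $m$ for which $t=t_0+\frac md$ also satisfies $\norm{tv_k}\ge\frac1{k+1}$: that $t$ is then a lonely time for $\vbf$.

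For the final step I would argue by contradiction. Suppose $\norm{(t_0+\frac md)v_k}<\frac1{k+1}$ for every $m=0,\dots,d-1$. Because $\gcd(v_k,d)=1$, as $m$ runs through a complete residue system modulo $d$ the fractions $\frac{mv_k}{d}$ exhaust $\frac1d\mathbb{Z}$ modulo $1$, so the residues $(t_0+\frac md)v_k\bmod1$ form a coset of $\frac1d\mathbb{Z}/\mathbb{Z}$ --- that is, $d$ points spaced exactly $\frac1d$ apart on the circle $\mathbb{R}/\mathbb{Z}$. Our assumption places all of them in the \emph{open} arc $(-\frac1{k+1},\frac1{k+1})$, whose length $\frac2{k+1}$ is at most $\frac12$ because $k\ge3$. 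But the complement of any connected arc containing $d\ge2$ such equally spaced points lies inside a single gap and so has length at most $\frac1d$, forcing the arc itself to have length at least $1-\frac1d\ge\frac12$; and in the equality case $d=2$ the two points are antipodal, so they cannot both lie in an open arc of length exactly $\frac12$. This contradiction produces the desired $m$.

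The linchpin --- and the place I expect to need care --- is recognising the $\frac1d$-periodicity of $A$, which turns a single good time into the full progression $t_0,t_0+\frac1d,\dots,t_0+\frac{d-1}d$ of good times; the only genuinely delicate point after that is the borderline case $d=2$, $k=3$, where the arc has length exactly $\frac12$ and one must appeal to its openness rather than a bare length comparison.
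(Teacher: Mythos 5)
Your argument is correct. Note that the paper itself gives no proof of this lemma: it is quoted from \cite{rosenfeld2025lonely} (Lemma 5, going back to the ``pre-jump'' idea of \cite{bienia1998flows}), so the only comparison available is with that standard argument, and yours is essentially it. Both proofs exploit the same key point you identify: since $d=\gcd(v_1,\ldots,v_{k-1})>1$ divides every $v_j$ with $j<k$, the functions $t\mapsto\norm{tv_j}$ are $\tfrac1d$-periodic, so a single lonely time $t_0$ for $(v_1,\ldots,v_{k-1})$ (supplied by the conjecture for $k-1$ speeds) yields the whole progression $t_0+\tfrac md$, $0\le m<d$, of times good for those $k-1$ runners; and since $\gcd(v_k,d)=1$, the positions of runner $k$ at these times are $d$ points spaced exactly $\tfrac1d$ apart on the circle. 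The only stylistic difference is the endgame: the standard argument simply picks the point of this coset nearest to $\tfrac12$, which has distance at least $\tfrac12-\tfrac1{2d}=\tfrac{d-1}{2d}\ge\tfrac14\ge\tfrac1{k+1}$ from $\mathbb{Z}$ (using $d\ge2$, $k\ge3$), whereas you argue by contradiction with an arc-length count, which forces you to treat the borderline case $d=2$, $k=3$ via openness of the arc. Your handling of that case is correct, but the direct ``nearest to $\tfrac12$'' selection avoids it altogether. One further small remark: the slack $\tfrac1k>\tfrac1{k+1}$ you flag at the outset is never actually needed in your argument, since the first $k-1$ runners are controlled purely by periodicity.
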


We easily generalize Lemma 6 of \cite{rosenfeld2025lonely}, which is the special case $\ell=k+1$ of the following lemma.

\begin{lemma}
\label{lem:div}
    Let $k \geq 3$, and assume that the Lonely Runner Conjecture holds for $k-1$ runners.  Let $p$ be a prime.  If there is some $\ell$ such that $I(k,\ell,p)$ is empty, then any counterexample $\mathbf u=(u_1,\ldots, u_k)$ to the Lonely Runner Conjecture with $k$ runners must have the property that $p$ divides $u_1u_2\cdots u_k$. 
\end{lemma}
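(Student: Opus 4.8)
The plan is to argue by contradiction: suppose $\ubf=(u_1,\ldots,u_k)$ is a counterexample to the Lonely Runner Conjecture with $k$ runners for which $p\nmid u_1u_2\cdots u_k$, and produce from it an element of $I(k,\ell,p)$, contradicting the assumption that this set is empty.

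First I would normalise $\ubf$. Dividing all speeds by a common factor does not affect whether a speed set has the LR property (rescaling time by that factor shows the two speed sets have the property simultaneously), so we may assume $\gcd(u_1,\ldots,u_k)=1$; this replacement also preserves $p\nmid u_i$ for every $i$, since each new speed divides the corresponding old $u_i$. Because we are assuming the Lonely Runner Conjecture for $k-1$ runners, Lemma \ref{rem:div} now prevents any $(k-1)$-element subset of $\{u_1,\ldots,u_k\}$ from having a common factor exceeding $1$: if one did, $\ubf$ would have the LR property, contradicting that it is a counterexample. Hence $\gcd(u_1,\ldots,u_{i-1},u_{i+1},\ldots,u_k)=1$ for each $i$.

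Next I would reduce the speeds modulo $\ell p$. Let $w_i\in\{0,1,\ldots,\ell p-1\}$ be the residue of $u_i$ modulo $\ell p$. Since $p\nmid u_i$ and $w_i\equiv u_i\pmod p$, the integer $w_i$ is not a multiple of $p$, so $w_i\in B(\ell,p)$ and $\wbf=(w_1,\ldots,w_k)\in B(\ell,p)^k$. It then remains to check that $\wbf$ is $(k,\ell,p)$-improper, i.e.\ that it satisfies neither bullet point in the definition of ``proper''. For the first bullet: since $\ell p\mid(u_j-w_j)$ for every $j$, the greatest common divisor with $\ell p$ is unchanged by the reduction, so for each $i$ we have $\gcd(w_1,\ldots,w_{i-1},w_{i+1},\ldots,w_k,\ell p)=\gcd(u_1,\ldots,u_{i-1},u_{i+1},\ldots,u_k,\ell p)$, and this equals $\gcd(1,\ell p)=1$ by the previous paragraph. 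For the second bullet: if some $t\in\{0,\ldots,\ell p-1\}$ satisfied $\norm*{\frac{tw_i}{\ell p}}\geq\frac1{k+1}$ for all $i$, then from $tu_i\equiv tw_i\pmod{\ell p}$ we would get $\norm*{\frac{tu_i}{\ell p}}\geq\frac1{k+1}$ for all $i$, so the real time $t/(\ell p)$ would witness the LR property for $\ubf$, contradicting again that $\ubf$ is a counterexample. Hence $\wbf\in I(k,\ell,p)$, which is the contradiction we want.

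Most of the work here is bookkeeping. The steps that genuinely need care are (a) justifying the normalisation to $\gcd=1$ and checking it remains compatible with $p\nmid u_1\cdots u_k$, which is exactly what allows us to invoke Lemma \ref{rem:div}; and (b) verifying that both ``properness'' conditions for $\wbf$ are equivalent to the corresponding assertions about $\ubf$ --- this is where the congruences $u_i\equiv w_i\pmod{\ell p}$, and the fact that $I(k,\ell,p)$ is defined with the specific denominator $\ell p$, are doing the work. I do not expect any serious obstacle beyond getting these compatibility points right; the construction is morally the ``pre-jump'' reduction of \cite{bienia1998flows} adapted to the modulus $\ell p$.
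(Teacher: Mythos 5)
Your proof is correct and is essentially the paper's own argument presented in contrapositive form: normalise to $\gcd(u_1,\ldots,u_k)=1$, reduce modulo $\ell p$ to land in $B(\ell,p)^k$, and handle the two properness conditions via Lemma \ref{rem:div} and the witness time $t/(\ell p)$ respectively. The only cosmetic difference is that you invoke Lemma \ref{rem:div} on $\ubf$ before reducing (to rule out the gcd condition for $\wbf$), whereas the paper reduces first and then lifts the gcd back to $\ubf$; the underlying reasoning is the same.
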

\begin{proof}
    Assume that $I(k,\ell,p)$ is empty. Suppose that $\ubf$ is a speed tuple such that $p\nmid u_1\cdots u_k$. We may assume that $\gcd(u_1,\ldots,u_k)=1$, because the LR property is invariant under rescaling and $p$ divides the original $u_1u_2\cdots u_k$ if $p$ divides the rescaled $u_1u_2\cdots u_k$.
    
    Let $v_i$ be the residue of $u_i$ modulo $\ell p$. It follows that $\vbf\in B(\ell,p)^k$ because otherwise $p\mid v_i$ for some $i$ and hence $p\mid u_i$. Thus $\vbf$ must be $(k,\ell,p)$-proper. 
    
    If there is $i$ such that $\gcd(v_1,\ldots,v_{i-1},v_{i+1},\ldots,v_k,\ell p)=d>1$, then as $d\mid \ell p$, we have \[d\mid\gcd(u_1,\ldots,u_{i-1},u_{i+1},\ldots,u_k).\] By Lemma \ref{rem:div}, $\ubf$ has the LR property.

    Alternatively, $\vbf$ is proper in the second sense, i.e., there exists $t\in\{0,1,\ldots,\ell p-1\}$ such that, for all $i\in\{1,\ldots,k\}$, $\norm{\frac{tv_i}{\ell p}}\geq\frac{1}{k+1}$. Then for all $i$, $\norm{\frac{tu_i}{\ell p}}\geq\frac{1}{k+1}$, so $\ubf$ also has the LR property. Therefore, $\ubf$ is not a counterexample to the Lonely Runner Conjecture.
\end{proof}

In theory, nothing prevents us from picking any $\ell$ to check if $I(k,\ell,p)$ is empty. The significance of $\ell=k+1$ is that there is no hope of having $I(k,\ell,p)=\emptyset$ for smaller $\ell$, when $p$ is prime. In fact, for $p>k+1$, $I(k,\ell,p)=\emptyset$ only if $p\ell$ is a multiple of $k+1$. To see this, note that the speed tuple $\ubf=(1,2,\ldots,k)$ achieves its maximum loneliness \[\max_{t\in\mathbb{R}}\min_{i\in\{1,\ldots,k\}}\norm{tu_i}=\frac{1}{k+1}\] exactly at the times $t=\frac{a}{k+1}$ for $a$ coprime to $k+1$.

Even if it may be impossible to exclude everything from $I(k,\ell,p)$ for small $\ell$, we can still obtain valuable information regarding $I(k,m,p)$ from $I(k,\ell,p)$, provided that $m=c\ell$ for some integer $c$. The following sieve lemma tells us that if we have already checked that $(v_1,\ldots,v_k)$ is $(k,\ell,p)$-proper, we can discard from $I(k,m,p)$ all $\vbf$ of the form \[(v_1+a_1\ell p,\ldots,v_k+a_k\ell p)\] where $a_1,\ldots,a_k\in\mathbb{Z}_c$. This observation will later make it easier to show computationally that $I(k,k+1,p)$ is empty.

\begin{definition}
    Let $m=c\ell$ for some positive integer $c$. The \textit{fiber} of $I(k,\ell,p)$ at level $m$, denoted $\delta_mI(k,\ell,p)$, is defined as
    \[\big\{(v_1+a_1\ell p,\ldots,v_k+a_k\ell p):\vbf\in I(k,\ell,p), a_i\in\{0,1,\ldots,c-1\}\big\},\] i.e., all $\vbf\in B(m,p)^k$ whose images modulo $\ell p$ are in $I(k,\ell,p)$.
\end{definition}

\begin{lemma}
\label{lem:sieve}
    For any $p$, if $\ell$ divides $m$, then
    \[I(k,m,p)\subseteq\delta_mI(k,\ell,p).\]
\end{lemma}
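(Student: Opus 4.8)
The plan is to prove the contrapositive at the level of individual speed sets: if $\wbf\in B(\ell,p)^k$ is $(k,\ell,p)$-proper, then every $\vbf\in B(m,p)^k$ lying over $\wbf$ (meaning $v_i\equiv w_i\pmod{\ell p}$ for all $i$) is $(k,m,p)$-proper. Granting this, let $\vbf\in I(k,m,p)$ be arbitrary and let $\wbf$ be its reduction modulo $\ell p$. One first checks $\wbf\in B(\ell,p)^k$: each $v_i\in B(m,p)$ is coprime to $p$, so $w_i=v_i\bmod\ell p$ is as well, and clearly $0\le w_i\le\ell p-1$. By the contrapositive $\wbf$ cannot be $(k,\ell,p)$-proper, so $\wbf\in I(k,\ell,p)$; since $\vbf\in B(m,p)^k$ reduces modulo $\ell p$ into $I(k,\ell,p)$, the definition of the shadow gives $\vbf\in\delta_mI(k,\ell,p)$, as required.

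To transfer properness upward, write $m=c\ell$ and $v_i=w_i+a_i\ell p$ with $a_i\in\mathbb{Z}_{\ge0}$, and consider the two ways $\wbf$ can be proper. For the first, suppose $d=\gcd(w_1,\ldots,w_{i-1},w_{i+1},\ldots,w_k,\ell p)>1$ for some $i$. Since $d\mid\ell p$ and $d\mid w_j$ we get $d\mid w_j+a_j\ell p=v_j$ for every $j\ne i$, and $d\mid\ell p\mid mp$ because $\ell\mid m$. Hence $d$ divides $\gcd(v_1,\ldots,v_{i-1},v_{i+1},\ldots,v_k,mp)$, which is therefore $>1$, so $\vbf$ is $(k,m,p)$-proper.

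For the second kind, suppose $t\in\{0,1,\ldots,\ell p-1\}$ satisfies $\norm{tw_i/(\ell p)}\ge1/(k+1)$ for all $i$. Take $t'=ct$. Then $t'\in\{0,1,\ldots,mp-1\}$, since $ct\le c(\ell p-1)=mp-c\le mp-1$, and for each $i$,
\[
\frac{t'v_i}{mp}=\frac{ct(w_i+a_i\ell p)}{c\ell p}=\frac{tw_i}{\ell p}+ta_i,
\]
which differs from $tw_i/(\ell p)$ by the integer $ta_i$. Therefore $\norm{t'v_i/(mp)}=\norm{tw_i/(\ell p)}\ge1/(k+1)$ for all $i$, so $\vbf$ is $(k,m,p)$-proper. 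This completes the contrapositive.

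There is no deep obstacle here; the argument is essentially an unwinding of the definitions. The one point that must be gotten right is the choice of lifted witness time $t'=ct$: this is exactly what makes the denominators match, since $mp=c\cdot\ell p$, while leaving every fractional part $tw_i/(\ell p)$ unchanged. The conceptual content is simply that a configuration that is already disqualified modulo $\ell p$ — either by a common factor or by a good time $t$ — remains disqualified modulo $mp$, so the only candidates for $I(k,m,p)$ are the lifts of candidates for $I(k,\ell,p)$, which is precisely the shadow.
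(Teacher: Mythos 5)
Your proof is correct and follows essentially the same route as the paper's: transfer properness from level $\ell$ up to level $m$ by lifting either the common divisor or the witness time via $t'=ct$, so that only lifts of improper sets can be improper. (One small quibble: since the lemma allows arbitrary $p$, membership in $B(m,p)$ means ``not a multiple of $p$'' rather than ``coprime to $p$,'' but your reduction step works verbatim with that reading, since $v_i\equiv w_i\pmod p$.)
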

\begin{proof}
    Let $m=c\ell$. We must show that
    \[B(m,p)^k\setminus\delta_mI(k,\ell,p)\subseteq B(m,p)^k\setminus I(k,m,p).\] Let $\ubf\in B(m,p)^k\setminus\delta_mI(k,\ell,p)$. Then, we can write $u_i=v_i+a_i\ell p$ for some $a_i\in\{0,1,\ldots,c-1\}$ and $(k,\ell,p)$-proper speed tuple $\vbf$. 
    
    If there is $i\in\{1,\ldots,k\}$ such that $\gcd(v_1,\ldots,v_{i-1},v_{i+1},\ldots,v_k,\ell p)=d>1$, then $d\mid v_j$ for all $j\neq i$ and $d\mid \ell p$, so in particular $d\mid u_j$ for all $j\neq i$ and $d\mid mp$. Hence $d\mid\gcd(u_1,\ldots,u_{i-1},u_{i+1},\ldots,u_k,mp)$ and so $\ubf$ is $(k,m,p)$-proper.

    Otherwise, there exists $t\in\{0,1,\ldots,\ell p-1\}$ such that, for all $i\in\{1,\ldots,k\}$, $\norm{\frac{tv_i}{\ell p}}\geq\frac{1}{k+1}$. Let $s=ct\in\{0,1,\ldots,mp-1\}$. Then, for all $i$,
    \[\norm*{\frac{su_i}{mp}}=\norm*{\frac{ct(v_i+a_i\ell p)}{mp}}=\norm*{\frac{c tv_i}{mp}+a_it}=\norm*{\frac{ctv_i}{mp}}=\norm*{\frac{tv_i}{\ell p}}\geq\frac{1}{k+1}.\]
    Therefore, $\ubf$ is $(k,m,p)$-proper. In both cases, $\ubf\in B(m,p)^k\setminus I(k,m,p)$. 
\end{proof}

A consequence of Lemma \ref{lem:sieve} is that the proportion of improper speed tuples decreases as we make the ansatz finer. More precisely, if $k,p$ are fixed and $\ell\mid m$, then the proportion of improper speed tuples at level $m$ is at most the proportion of improper speed tuples at level $\ell$. Therefore, if we have a sequence $(\ell_n)$ where $\ell_n\mid\ell_{n+1}$, then this proportion at level $\ell_n$ is non-increasing.

\section{Verification of dividing primes}
\label{sec:ver}
From Lemma \ref{lem:div}, if we can show that $I(k,k+1,p)$ is empty, then we have that $p\mid v_1v_2\ldots v_k$ whenever $\vbf$ does not have the LR property. Following \cite{rosenfeld2025lonely}, we wish to find many primes $p$ for which $I(k,k+1,p)$ is empty. Consider a fixed prime $p$.

However, unlike \cite{rosenfeld2025lonely}, instead of directly checking all possible $(p-1)^k(k+1)^k$ speed tuples in $B(k+1,p)^k$, we first use Lemma \ref{lem:sieve} to ``sieve out" most candidates. More precisely, we compute $I(k,\ell,p)$ for some smaller $\ell\mid (k+1)$ first, and the compute $I(k,k+1,p)$ inside the fiber of $I(k,\ell,p)$. Furthermore, we can choose $\ell_1,\ldots,\ell_n$ such that $\ell_1=1,\ell_n=k+1$ and $\ell_{r+1}=c_r\ell_r$ for some $c_r\in\mathbb{N}$ for $r\in\{1,\ldots,n-1\}$. At the end, we have to check only
\[(p-1)^k\left(\ell_1^k+s_1\ell_2^k+s_1s_2\ell_3^k+\cdots+s_1\cdots s_{n-1}\ell_n^k\right)\]
tuples, where $0\leq s_r\leq1$ is the proportion of tuples that survive the sieve at level $\ell_r$. This number can be made even smaller if we use two distinct sequences $\ell_1,\ldots,\ell_n$ and $\ell'_1,\ldots,\ell'_{n'}$ that merge (by intersection) at the end, as illustrated by the case $k+1=10$. 

In the worst case where $s_1=\cdots=s_{n-1}=1$, this number is actually slightly larger than $(k+1)^k(p-1)^k$ (because we wasted time checking prior to the last step). In practice, however, $s_r$ tends to be very small and so this improves on the direct method.

The method we described applies to any value of $k$, but it is most effective when $k+1$ is highly composite. For the sake of illustration and completeness, we explicitly state the algorithms for the cases $k=8,9$, which are relevant to proving the Lonely Runner Conjecture for 9 and 10 runners.

\subsection{Case $k=8$}
\begin{enumerate}
    \item Compute $I(8,1,p)$. Then find its fiber $\delta_3I(8,1,p)$.
    \item Compute $I(8,3,p)$ within $\delta_3I(8,1,p)$. Again, find the fiber $\delta_9I(8,3,p)$.
    \item Compute $I(8,9,p)$ within $\delta_9I(8,3,p)$.
\end{enumerate}

\subsection{Case $k=9$}
\begin{enumerate}
    \item Compute $I(9,1,p)$. Then find its fibers $\delta_2I(9,1,p)$ and $\delta_5I(9,1,p)$.
    \item 
    \begin{enumerate}
        \item Compute $I(9,2,p)$ within $\delta_2I(9,1,p)$, and find $\delta_{10}I(9,2,p)$.
        \item Compute $I(9,5,p)$ within $\delta_5I(9,1,p)$, and find $\delta_{10}I(9,5,p)$.
    \end{enumerate}
    \item Compute $I(9,10,p)$ within $\delta_{10}I(9,2,p)\cap\delta_{10}I(9,5,p)$.
\end{enumerate}

\section{Proof of Theorem \ref{thm:lrc}}
\label{sec:main}
Let $C_k=\left[\frac{\binom{k+1}{2}^{k-1}}{k}\right]^k$ be the quantity derived from the result of Malikiosis, Santos and Schymura \cite{malikiosis2024linearly}. Notice that $C_8<10^{80}$ and $C_9<10^{111}$.

Let
\[\begin{split}
        S_8=\{&47, 53, 59, 61, 67, 71, 73, 79, 83, 89, 97, 101, 103, 107, 109,\\&113, 127, 131, 137, 139, 149, 151, 157, 163, 167, 173, 179,\\&181, 191, 193, 197, 199, 211, 223, 227, 229, 233, 239, 241\}
    \end{split}\]
and
\[\begin{split}
        S_9= \{&137, 139, 149, 151, 157, 163, 167, 173, 179, 181, 191, 193,\\& 197, 199, 211, 223, 227, 229, 233, 239, 241, 251, 257, 263,\\& 269, 271, 277, 281, 283, 293, 307, 311, 313, 317, 331, 337,\\& 347, 349, 353, 359, 367, 373, 379, 383, 389, 397, 401\}.
\end{split}\]
Using the algorithms presented in Section \ref{sec:ver}, we found that $I(k,k+1,p)$ is the empty set, for every $p\in S_k$ and $k\in\{8,9\}$. 

As the products satisfy
\[\prod_{p\in S_8}p>10^{82}\]
and
\[\prod_{p\in S_9}p>10^{112},\] it follows that the Lonely Runner Conjecture holds for $k=8$ and (then) $k=9$.

\section{Implementation details}
We carry out the verification algorithms in C++. Our code is developed based on that of Rosenfeld \cite{codebase}, but we optimize for parallelization across cores. We employed OpenAI GPT-5 to assist with code generation, especially in low-level optimization. The code used, along with the result receipts, is uploaded to the author's GitHub \cite{mycode}. The computations were performed on a 14-core Apple M3 Max processor. On this machine, the verification for $k=8$ takes 15 minutes, and for $k=9$ takes under 23 hours.

We remark that we work with sets instead of $k$-tuples (to save a factor of $k!$).  

For $k=8$, we tested every prime from\footnote{For $p\leq k+1$, in Lemma \ref{lem:div}, $t=1$ serves as a witness time for every $\vbf\in B(k+1,p)^k$, so trivially there is no $(k,k+1,p)$-improper speed tuple. Had we exploited this fact in Section \ref{sec:main}, we would have got a factor of $\operatorname{lcm}(1,2,\ldots,k)$ for free, and needed to check a couple fewer primes.} $11$ to $241$.  For all $11\leq p\leq43$, the set $I(8,9,p)$ was non-empty. On the other hand, for $k=9$, we only checked primes from $137$ to $401$. It should not be inferred from our prior discussion that $I(9,10,p)$ is non-empty for $11\leq p\leq131$.

There are two reasons why we start $p$ from a large value. One is that larger $p$ contributes more to the product. But more importantly, here it is actually faster to verify large $p$ (in the range of low hundreds). The running time forms a U-shaped curve with run time of 1-2 hours from the start\footnote{$p=151$ is a far outlier here, taking 7.5 hours to verify.}, going as low as 5 minutes for $227\leq p\leq277$, and going up again to 40 minutes for the last $p$.

The surprising behavior of running time reflects the trade-off between the size of $B(1,p)^9$ and the size of $I(9,1,p)$ in the first step. When $p$ is small, there are not many $\vbf\in B(1,p)^9$ to check, but it is much more likely (because the ansatz in Lemma \ref{lem:div} allows fewer witness times) that $\vbf$ belongs in $I(9,1,p)$, so we need more time in steps 2 and 3 to check $\delta_mI(9,1,p)$. The opposite is true for $p$ large. In the middle, there exists a ``Goldilocks zone" of primes such that both $I(9,1,p)$ and $B(1,p)^9$ are not too large, and in this case it happens to be around 250.

\section{Further work}
Since $11$ is prime, our sieve algorithm in the case $k=10$ has only two steps: find $I(10,1,p)$ first, then $I(10,11,p)$. In contrast to when $k+1$ is composite, we do not have an intermediate sieve. Each $\vbf\in I(10,1,p)$ corresponds to $11^{10}$ speed tuples in $\delta_{11}I(10,1,p)$, so this quickly becomes the bottleneck that makes verifying $p$ impossible in a reasonable time.

However, since we only want to conclude that $I(10,\ell,p)$ is empty, we can also insert an additional filter, say 2, and then compute $I(10,22,p)$ within $\delta_{11}I(10,2,p)$. This additional filter may sufficiently reduce $I(10,2,p)$ to a manageable number of cases. Still, within this framework we cannot avoid extending from $\ell$ to $11\ell$ at some point because, as we previously discussed, for $I(10,\ell,p)$ to be empty, we must have $11\mid\ell$. 

Proving the case $k=11$ assuming the case $k=10$ seems more approachable. By our estimate, proving this using our refinement alone will take on the order of weeks. Moreover, Rosenfeld's \cite{rosenfeld2025lonely2} refinement on the gcd condition can be combined with ours to further reduce the number of candidate speed tuples. 

Ultimately, our algorithm, like the original proposed by Rosenfeld, still needs to perform $\Theta(p^k)$ checks to verify that $I(k,\ell,p)$ is empty. Our reduction makes the constant of the scaling much smaller, but we see no easy way to escape this scaling law.

\section*{Acknowledgements}
I am very grateful to my adviser, Noah Kravitz, for his thoughtful guidance, constant encouragement and meticulous feedback that has vastly improved the paper. I thank the referees for their valuable comments. I am supported by the King's Scholarship (Thailand).

\bibliographystyle{plain}
\bibliography{refs}

\end{document}